\documentclass[11pt,dvips,twoside,letterpaper]{article}
\usepackage{pslatex}
\usepackage{fancyhdr}
\usepackage{graphicx}
\usepackage{geometry}
 \RequirePackage[T1]{fontenc}

\def\figurename{Figure} 
\makeatletter
\renewcommand{\fnum@figure}[1]{\figurename~\thefigure.}
\makeatother

\def\tablename{Table} 
\makeatletter
\renewcommand{\fnum@table}[1]{\tablename~\thetable.}
\makeatother

\usepackage{amsmath}
\usepackage{amssymb}
\usepackage{amsfonts}
\usepackage{amsthm,amscd}

\newtheorem{theorem}{Theorem}[section]
\newtheorem{lemma}[theorem]{Lemma}

\newtheorem{proposition}[theorem]{Proposition}
\theoremstyle{definition}
\newtheorem{definition}[theorem]{Definition}
\newtheorem{example}[theorem]{Example}

\theoremstyle{remark}

\numberwithin{equation}{section}

\def\P{\mathbb P}
\def\R{\mathbb R}
\def\E{\mathbb E}

\def\E{\mathbb E}
\def\N{\mathbb N}


\setlength{\topmargin}{-0.35in}
\setlength{\textheight}{8.5in}   
\setlength{\textwidth}{5.5in}    
\setlength{\oddsidemargin}{0.5in}
\setlength{\evensidemargin}{0.5in} \setlength{\headheight}{26pt}
\setlength{\headsep}{8pt}
\thispagestyle{empty} \setcounter{page}{1}
\begin{document}
\title{\bfseries\scshape{Reflected generalized backward doubly SDEs driven by L\'{e}vy processes and Applications}\thanks{This work is partially supported by TWAS Research Grants to individuals
No. 09-100 RG/MATHS/AF/\mbox{AC-I}--UNESCO FR: 3240230311.} }
\author{\bfseries\scshape Auguste Aman\thanks{augusteaman5@yahoo.fr}\\
{\it U.F.R Maths and informatique, Universit\'{e} de Cocody}, \\{\it 582 Abidjan 22, C\^{o}te d'Ivoire}}

\date{}
\maketitle

\begin{abstract}
In this paper, we study reflected generalized backward doubly
stochastic differential equations driven by Teugels martingales
associated with Lévy process (RGBDSDELs, in short) with one continuous barrier. Under uniformly Lipschitz
coefficients, we prove existence and uniqueness result by means
of the penalization method and the fixed point theorem. As an application, this study allows us to give a probabilistic representation for the solutions to a class of reflected stochastic partial differential integral equations (SPDIEs, in short) with a nonlinear Neumann boundary condition.
\end{abstract}

\noindent {\bf AMS Subject Classification:} 60H15; 60H20

\vspace{.08in} \noindent \textbf{Keywords}: Reflected backward doubly SDEs, stochastic
PDIEs; L\'{e}vy process; Teugels martingale;
Neumann boundary condition.

\section{Introduction}
The theory of nonlinear backward stochastic differential equations (BSDEs, in short) have been first
introduced by Pardoux and Peng \cite{PP1}. They proved existence and uniqueness of the adapted processes $(Y,Z)$ solution of the following equation:
\begin{eqnarray}
Y_{t}&=&\xi+\int_{t}^{T}f(s,Y_{s},Z_{s})ds-\int_{t}^{T}Z_{s}dW_s,\,\ 0\leq t\leq
T,\label{a00}
\end{eqnarray}
when the terminal value $\xi$ is square integrable and the coefficient $f$ is Lipschitz in $(y, z)$ uniformly in $(t,\omega)$.

Mainly motivated by financial problems, stochastic control, stochastic games and probabilistic interpretation for
solutions to nonlinear partial differential equations (PDE, in short), the theory of BSDEs was developed
at high speed during the 1990. We refer the reader to survey article by El Karoui et al. \cite{Kal2}, Hamadène and Lepeltier \cite{HL}, Pardoux and Peng \cite{PP2}, Pardoux and Zhang \cite{PZ} and references therein.

In this dynamic, El Karoui et al. firstly introduced in \cite{Kal1} the notation of a solution of reflected backward
stochastic differential equations (RBSDEs, in short) with a continuous barrier. A solution for such equation associated with $(\xi,f,S)$, is a triple $(Y_t,Z_t,K_t)_{0\leq t\leq T}$ , which satisfies
\begin{eqnarray*}
Y_{t}&=&\xi+\int_{t}^{T}f(s,Y_{s},Z_{s})ds+K_T-K_t-\int_{t}^{T}Z_{s}dW_s,\,\ 0\leq t\leq
T,
\end{eqnarray*}
and $Y_t\geq S_t$ a.s. for any $t\in[0,T]$. The process $(K_t)_{0\leq t\leq T}$ is
non decreasing continuous whose role is to push upward the
process $Y$, in order to keep it above $S$. And it satisfies Skorokhod condition
\begin{eqnarray*}
\int_{0}^{T}(Y_s-S_s)dK_s=0.
\end{eqnarray*}
As shown in \cite{Kal1}, RBSDE's are a useful tool for the pricing of American options and the probabilistic representation for solutions to PDE's obstacle problem. Recall that many assumptions have been made to relax the assumption on the coefficient $f$ and the barrier; for instance, in \cite{M} Matoussi established the existence of a solution for RBSDE's with continuous and linear growth coefficient. Moreover, in \cite{H,HLM} RBSDEs with discontinuous barrier and  double barrier with continuous coefficients have been studied respectively. Also many authors studied RBSDEs replacing brownian motion by jumps process (see Hamadène and Ouknine \cite{HO} and references therein).

On the other hand, Pardoux and Peng study in \cite{PP3} the so-called backward doubly stochastic differential equations (BDSDEs, in short):
\begin{eqnarray}
Y_{t}&=&\xi+\int_{t}^{T}f(s,Y_{s},Z_{s})ds+\int_{t}^{T}g(s,Y_{s},Z_{s})dB_s-\int_{t}^{T}Z_{s}dW_s,\,\ 0\leq t\leq
T.\label{a00}
\end{eqnarray}
where $dW$ is a forward Itô integral and $dB$ the backward one. They prove among other a probabilistic representation for a class of quasi linear stochastic partial differential equations (SPDEs, in short).

In this paper, we study reflected generalized BDSDEs driven by Teugel martingale with respect to Lévy process (RGBDSDEL, in short) under Lipschiz coefficient, motivated by it application to obstacle problem for stochastic partial differential integral equations (SPDIEs, in short) and inspired by \cite{YO}.

The theory of BSDEs driven by Teugels martingales associated with Lévy process have been intensively study since Nualart and Schoutens prove in  \cite{NS1} the martingale representation theorem associated to L\'{e}vy process. They also derive in \cite{NS2} an existence and uniqueness result to BSDEs driven by Teugels martingales associated with Lévy process.  Since then, many others results have been derived. We refer the reader to \cite{Ot},\ \cite{QZ}, \cite{HY1} and reference therein. Note that all those studies were important from a pure mathematical point of view as well as in the world of finance. It could be used for the purpose of option pricing in a L\'{e}vy market and related PDIEs which provided an analogue of the famous Black and Scholes formula.

Roughly speaking, the present paper have two goal: first the existence and uniqueness of the solution to RGBDSDEL
\begin{eqnarray}
Y_{t}&=&\xi+\int_{t}^{T}f(s,Y_{s^-},Z_{s})ds+\int_{t
}^{T}\phi(s,Y_{s^-})dA_s+\int_{t}^{T}g(s,Y_{s^-},Z_s)\,dB_{s}\nonumber\\
&&-\sum_{i=1}^{m}\int_{t}^{T}Z^{(i)}_{s}dH^{(i)}_{s}+K_{T}-K_t,\,\ 0\leq t\leq
T,\label{a1}
\end{eqnarray}
is derived by means of the penalization method and the fixed point theorem when the terminal value $\xi$ is square integrable and the coefficients $f$ and $g$ is Lipschitz in $(y, z)$ uniformly in $(t,\omega)$. Furthermore, using this result we get a probabilistic representation for the solution of reflected SPDIE.

Due to the fact that the solution should be adapted to a family $(\mathcal{F}_t)$ which is not a filtration, the usual technics used in the classical reflected BSDEs (see e.g. \cite{Kal1}) does not work. Indeed, the section theorem cannot be easily used to derive that the solution stays above the obstacle for all time.

We give here a method which allows us to overcome this difficulty. The idea consists to start
from the basic RGBDSDEL with $g$ independent from $(y, z)$. We transform it to a RGBDSDEL with $g=0$,
for which we prove the existence and uniqueness of solutions by a penalization method. The section theorem is then
used in this simple context $(g=0)$ to prove that the solution of the RGBDSDEL with $g=0$, stays above the
obstacle at each time. The case where the coefficients $g$ depend on $(y, z)$ is then deduced by using a Banach fixed point theory.

The rest of paper is organized as follows. In Section 2, we state some notations, needed assumptions
and the definition of  solution to RGBDSDELs. Section 3, is devoted
to give our main results for existence and uniqueness for RGBDSDEL. Finally Section 4 point out a probabilistic representation of solutions to a class of reflected SPDIEs with a nonlinear Neumann boundary condition.

\section{Notations,  assumptions and definitions}
\setcounter{theorem}{0} \setcounter{equation}{0}
The scalar product of the space $\R^{d} (d\geq 2)$ will be denoted
by $<.>$ and the associated Euclidian norm  by $\|.\|$.

In what follows let us fix a positive real number
$T>0$. Let $(\Omega, \mathcal{F},\P,\mathcal{F}_{t},B_{t}, L_{t}: t\in [0, T])$ be a complete Wiener-L\'{e}vy space in $\R \times\R\backslash\{0\}$, with Levy measure $\nu$, i.e. $(\Omega, \mathcal{F},\P)$ is a complete probability space, $\{\mathcal{F}_{t}: t\in[0, T]\}$ is a right-continuous increasing family of complete sub $\sigma$-algebras of $\mathcal{F}$, $\{B_t: t\in [0, T]\}$ is a standard Wiener process in $\R$ with respect to $\{\mathcal{F}_{t}: t\in[0, T]\}$ and $\{L_{t}: t\in [0, T]\}$ is a $\R$-valued L\'{e}vy process independent of $\{B_{t} : t \in[0, T]\}$ and has only $m$ jumps size with non Brownian associated to a standard L\'{e}vy measure $\nu$ satisfying the following conditions:
$
\begin{array}{l}
\int_{\R}(1 \wedge y)\nu(dy) < \infty,
\end{array}
$

Let $\mathcal{N}$ denote the totality of $\P$-null sets of $\mathcal{F}$. For each $t\in[0, T]$, we define
\begin{eqnarray*}
\mathcal{F}_{t}= \mathcal{F}_{t}^{L}\vee \mathcal{F}_{t,T}^{B}\; \mbox{and}\; \widetilde{\mathcal{F}}_{t}= \mathcal{F}_{t}^{L}\vee \mathcal{F}_{T}^{B}
\end{eqnarray*}
where for any process $\{\eta_t\},\; \mathcal{F}_{s,t}^{\eta}=\sigma(\eta_{r}-\eta_s, s\leq r \leq t)\vee
\mathcal{N},\; \mathcal{F}_{t}^{\eta}=\mathcal{F}_{0,t}^{\eta}$.

We remark that ${\bf F}= \{\mathcal{F}_{t},\ t\in
[0,T]\}$ is neither increasing nor decreasing so that it does not a filtration. However $\widetilde{{\bf F}}= \{\widetilde{\mathcal{F}}_{t},\ t\in
[0,T]\}$ is a filtration.

We denote by $(H^{(i)})_{i\geq 1}$ the Teugels Martingale associated with the L\'{e}vy
process $\{L_t : t\in[0, T]\}$. More precisely
\begin{eqnarray*}
H^{(i)}=c_{i,i}Y^{(i)}+c_{i,i-1}Y^{(i-1)}+\cdot\cdot\cdot+c_{i,1}Y^{(1)}
\end{eqnarray*}
where $Y^{(i)}_{t}=L_{t}^{i}-\E(L^{i}_t)=L_{t}^{i}-t\E(L^{1}_t)$ for all $i\geq 1$ and $L^{i}_t$ are power-jump processes. That is $L^{1}_t=L_t$ and $L^{i}_t=\sum_{0<s<t}(\Delta L_s)^i$ for all $i\geq 2$, where $X_{t^-} = \lim_{s\nearrow t} X_s$ and $\Delta X_t = X_t - X_{t^-}$. It was shown in Nualart and Schoutens \cite{NS1} that the coefficients $c_{i,k}$ correspond to the orthonormalization of the polynomials $1, x, x^2,...$ with respect to the measure $\mu(dx)=x^{2}d\nu(x)+\sigma^{2}\delta_{0}(dx)$:
\begin{eqnarray*}
q_{i-1}(x)=c_{i,i}x^{i-1}+c_{i,i-1}x^{i-2}+\cdot\cdot\cdot+c_{i,1}.
\end{eqnarray*}
We set
\begin{eqnarray*}
p_i(x)=xq_{i-1}(x)=c_{i,i}x^{i}+c_{i,i-1}x^{i-1}+\cdot\cdot\cdot+c_{i,1}x^{1}.
\end{eqnarray*}
The martingale $(H^{(i)})_{i=1}^{m}$ can be chosen to be pairwise strongly orthonormal
martingale.

In the sequel, let\
$\{A_t,\ 0\leq t\leq T\}$\ be a continuous, increasing and
${\bf F}$-measurable real valued with bounded
variation on $[0,T]$ such that\ $A_0=0$.

For any $m\geq 1$, we consider the following spaces of processes:
\begin{enumerate}
\item $\mathcal{M}^{2}(\R^{m})$ denote the space of real valued, square integrable and $\mathcal{F}_{t}$-measurable processes $\varphi=\{\varphi_{t}:\;
t\in[0,T]\}$ such that
\begin{description}
\item $\|\varphi\|^{2}_{{\mathcal{M}}^{2}}=\E\int ^{T}_{0}\|\varphi_{t}\|^{2}dt<\infty$.
\end{description}
\item  $\mathcal{S}^{2}(\R)$ is the subspace of  $\mathcal{M}^{2}(\R)$ formed by the $\mathcal{F}_{t}$-measurable processes $\varphi=\{\varphi_{t}:\;
t\in[0,T]\}$ right continuous with left limit (rcll) such that
\begin{description}
\item $\displaystyle{\|\varphi\|^{2}_{\mathcal{S}^{2}}=\E\left(\sup_{0\leq t\leq T}|\varphi_{t}|
^{2}\right)<\infty}$.
\end{description}
\item  $\mathcal{A}^{2}(\R)$ is the set of $\mathcal{F}_{t}$-measurable, continuous, real-valued,
increasing process $\varphi=\{\varphi_{t}:\; t\in[0,T]\}$ such that $\varphi_0 = 0,\; \E|\varphi_T|^2 < \infty$
\end{enumerate}
Finally $\mathcal{E}^{2,m}=\mathcal{S}^{2}(\R)\times{\mathcal{M}}^{2}(\R^{m})\times\mathcal{A}^{2}(\R)$ endowed with the norm
\begin{eqnarray*}
\|(Y,Z,K)\|^{2}_{\mathcal{E}}=\E\left(\sup_{0\leq t\leq T}|Y_{t}|
^{2}+\int ^{T}_{0}\|Z_{t}\|^{2}dt+|K_{T}|^{2}\right).
\end{eqnarray*}
is a Banach space.

Next, we consider needed assumptions
\begin{itemize}
\item[$(\textbf{H1})$] $\xi$ is a square integrable random variable which is $\mathcal{F}_{T}$-measurable such  that for all $\mu>0$
$$ \mathbb{E}\left(e^{\mu A_T}|\xi|^2\right) < \infty. $$
\item[$(\textbf{H2})$]
$f:\Omega\times [0,T]\times\R\times \R^{m}\rightarrow \R$\
and $\phi:\Omega\times [0,T]\times\R \rightarrow \R,$ such that
\begin{itemize}
\item[$(a)$] There exist $\mathcal{F}_t$-measurable processes $\{f_t,\,
\phi_t,\,0\leq t\leq T\}$ with values in $[1,+\infty)$, and constants $\mu>0$ and $K>0$
such that for any $(t,y,z)\in [0,T]\times\R\times\R^{m}$ we have:
\begin{eqnarray*}
\left\{
\begin{array}{l}
f(t,y,z)\,\mbox{and}\;  \phi(t,y)\, \mbox{are}\, \mathcal{F}_t\mbox{-measurable processes},\\\\
|f(t,y,z)|\leq f_t+K(|y|+\|z\|),\\\\
|\phi(t,y)|\leq \phi_t+K|y|,\\\\
\displaystyle \E\left(\int^{T}_{0} e^{\mu
A_t}f_t^{2}dt+\int^{T}_{0}e^{\mu
A_t}\phi_t^{2}dA_t\right)<\infty.
\end{array}\right.
\end{eqnarray*}
\item[$(b)$] There exist constants $c>0, \beta<0$ and $0<\alpha<1$ such that for any $(y_1,z_1),\,(y_2,z_2)\in\R\times\R^{m}$,
\begin{eqnarray*}
\left\{
\begin{array}{l}
(i)\, |f(t,y_1,z_1)-f(t,y_2,z_2)|^{2}\leq c(|y_1-y_2|^{2}+\|z_1-z_2\|^{2}),\\\\
(ii)\; \langle y_1-y_2,\phi(t,y_1)-\phi(t,y_2)\rangle\leq
\beta|y_1-y_2|^{2},\\\\
(iv)\; |\phi(t,y_1)-\phi(t,y_2)|\leq
c|y_1-y_2|^{2},
\end{array}\right.
\end{eqnarray*}
\end{itemize}
\item[$(\textbf{H3})$]\
$g:\Omega\times [0,T]\times\R\times\R^m \rightarrow \R,$ such that
\begin{itemize}
\item[$(a)$] There exist $\mathcal{F}_t$-measurable process $\{g_t:\,0\leq t\leq T\}$ with values in $[1,+\infty)$, constants $\mu>0$ and $K>0$ such that for any $(t,y,z)\in
[0,T]\times\R\times\R^{m}$ we have:
\begin{eqnarray*}
\left\{
\begin{array}{l}
g(t,y,z)\,\,\mbox{is}\, \mathcal{F}_t\mbox{-measurable processes},\\\\
|g(t,y,z)|\leq g_t+K(|y|+|z|),\\\\
\displaystyle \E\left(\int^{T}_{0}e^{\mu A_t}g_t^{2}dt\right)<\infty.
\end{array}\right.
\end{eqnarray*}
\item[$(b)$]There exist constants $c>0$ and $0<\alpha<1$ such that for any $(y_1,z_1),\,(y_2,z_2)\in\R\times\R^{m}$,\,\\ $|g(t,y_1,z_1)-g(t,y_2,z_2)|^{2}\leq c|y_1-y_2|^{2}+\alpha\|z_1-z_2\|$.
\end{itemize}
\item[($\textbf{H4}$)] The obstacle $\left\{  S_{t},0\leq t\leq T\right\}$,
is a $\mathcal{F}_{t}$-measurable real-valued process satisfying
\begin{description}
\item $(i)\; \E\left(  \sup_{0\leq t\leq T}\left|S^+_{t}\right| ^{2}\right)  <\infty$,
\item $(ii)\; S_{T}\leq\xi$\;\; a.s.
\end{description}
\end{itemize}
\begin{definition}
We call solution of the RGBDSDEL \eqref{a1},
a $(\R\times\R^d\times\R+)$-valued process  $(Y,Z,K)$ which satisfied
$\eqref{a1}$
such that the following holds $\P$-a.s
\begin{description}
\item $(i)$ $(Y,Z,K)\in\mathcal{E}^{2,m}$
\item  $(ii)$ $Y_{t}\geq S_{t},\,\,\,\ 0\leq t\leq T$,\,\
\item  $(iii)$\ $\displaystyle \int_{0}^{ T}\left( Y_{t^-}-S_{t}\right)
dK_{t}=0$.
\end{description}
\end{definition}
\section{Main results}
\begin{lemma}(Comparison theorem see \cite{QZ})\label{Thm:comp}
Let $\xi^1$ and $\xi^2$ be two square integrable and $\widetilde{\mathcal{F}}_T$-measurable random variables, $f^1,f^2:[0,T]\times\Omega\times\R\times\R^{m}\rightarrow\R$ and
$\phi:[0,T]\times\Omega\times\R\rightarrow\R$ be three measurable functions. For $k=1,2$, let $(Y^k,Z^k)$ be a unique solution of the following BSDE:\\
$\left\{
\begin{array}{l}
Y_{t}^k=\xi^k+\int_{t}^{T}f^k(s,Y^k_{s^-},Z^k_{s})ds+\int_{t
}^{T}\phi(s,Y^k_{s^-})dA_s-\sum_{i=1}^{m}\int_{t}^{T}Z^{k(i)}_{s}dH^{(i)}_{s}\\\\
\E\left(\sup_{0\leq t\leq T}|Y^k_t|^2+\int_{0}^{T}\|Z_s^k\|^2ds\right)
\end{array}
\right.
$\\
We assume that
\begin{itemize}
\item $\xi^1\geq \xi^2, \ \P$-a.s.,
\item $f^1(t,Y^2,Z^2)\geq f^2(t,Y^2,Z^2),\; \P$-a.s.,
\item $\displaystyle\beta_t^i=\frac{f^1(t,Y_{t^{-}}^{2},\widetilde{Z}_{t}^{(i-1)})-
 f^1(t,Y_{t^{-}}^{2},\widetilde{Z}_{t}^{(i)})}{Z_t^{1(i)}- Z_{t}^{2(i)}}
  \mathbf{1}_{\left\{Z_t^{1(i)}\neq Z_{t}^{2(i)}\right\}},$
\end{itemize} where
$$\widetilde{Z}^{(i)}=\Big(Z^{2(1)},Z^{2(2)},...,Z^{2(i)},Z^{1(i+1)},...,Z^{1(m)}\Big)$$
satisfying $\displaystyle \sum_{i=1}^{m}\beta_t^i\Delta H_t^{(i)}>-1, \ dt\otimes d\P$-a.s.\\
Then, we have $Y_t^1\geq Y_t^2, \ \ a.s., \ \forall t\in[0,T]$.
 Moreover, if \ $ \xi^{1}> \xi^{2}$ or $f^1(t,Y^2,Z^2)> f^2(t,Y^2,Z^2)$ or $\phi^1(t,Y^2)> \phi^2(t,Y^2)$, a.s., we have $ Y_{t}^{1}> Y_{t}^{2},\ \ a.s.,\ \forall
t \in [0,T]$.
\end{lemma}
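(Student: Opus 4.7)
The plan is to reduce the comparison inequality to a linear BSDE for the difference process and then invoke a Girsanov-type change of measure built from the exponential of the Teugels martingale driven by the coefficients $\beta_t^i$.

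First, I would set $\Delta Y = Y^1 - Y^2$ and $\Delta Z = Z^1 - Z^2$ and subtract the two equations to obtain
\begin{eqnarray*}
\Delta Y_t = (\xi^1 - \xi^2) + \int_t^T \delta f_s\, ds + \int_t^T \delta\phi_s\, dA_s - \sum_{i=1}^{m}\int_t^T \Delta Z^{(i)}_s\, dH^{(i)}_s,
\end{eqnarray*}
with $\delta f_s = f^1(s,Y^1_{s^-},Z^1_s) - f^2(s,Y^2_{s^-},Z^2_s)$ and $\delta\phi_s = \phi(s,Y^1_{s^-}) - \phi(s,Y^2_{s^-})$. I would then linearize by inserting telescoping terms: write $\delta f_s$ as the sum of $[f^1(s,Y^1_{s^-},Z^1_s) - f^1(s,Y^2_{s^-},Z^1_s)] + [f^1(s,Y^2_{s^-},Z^1_s) - f^1(s,Y^2_{s^-},Z^2_s)] + [f^1(s,Y^2_{s^-},Z^2_s) - f^2(s,Y^2_{s^-},Z^2_s)]$. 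The last bracket is nonnegative by hypothesis; the first is a bounded process $a_s$ times $\Delta Y_{s^-}$ by the Lipschitz assumption \textbf{(H2)(b)(i)}; the middle telescope, decomposed coordinatewise through the intermediate vectors $\widetilde Z^{(i)}$, is exactly $\sum_i \beta^i_s \Delta Z^{(i)}_s$ with the $\beta^i$ given in the lemma. Similarly, \textbf{(H2)(b)(ii)} gives $\delta\phi_s \leq \beta\,\Delta Y_{s^-}$ (with in fact a bounded density $\gamma_s$), and by \textbf{(H2)(b)(iv)} we have $|\delta\phi_s| \leq c|\Delta Y_{s^-}|$.

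Next, I would introduce the Dol\'eans--Dade exponential
\begin{eqnarray*}
\mathcal{E}_t = \exp\!\left(\int_0^t a_s\, ds + \int_0^t \gamma_s\, dA_s\right) \cdot \mathcal{D}_t,\qquad \mathcal{D}_t = \mathcal{E}\!\left(\sum_{i=1}^{m}\int_0^{\cdot}\beta^i_s\, dH^{(i)}_s\right)_{\!t},
\end{eqnarray*}
where the assumption $\sum_i \beta^i_t \Delta H^{(i)}_t > -1$ precisely guarantees that $\mathcal{D}$ is a strictly positive local martingale; the Lipschitz bound on $f^1$ in $z$ gives boundedness of the $\beta^i_s$ that in turn yields genuine martingality and the integrability needed below. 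Applying It\^o's product formula to $\mathcal{E}_t \Delta Y_t$ and using the linearizations, the $\Delta Z$-drift terms cancel against the stochastic integral contribution from $\mathcal{D}$, and one ends up with
\begin{eqnarray*}
\mathcal{E}_t \Delta Y_t = \mathcal{E}_T(\xi^1-\xi^2) + \int_t^T \mathcal{E}_s\, R_s\, ds - (\text{local martingale from } t \text{ to } T),
\end{eqnarray*}
where $R_s \geq 0$ collects the remainders $f^1(s,Y^2,Z^2) - f^2(s,Y^2,Z^2)$ and the sign contribution from $\phi$. Taking conditional expectation with respect to $\widetilde{\mathcal{F}}_t$ (using the martingality established above to drop the stochastic integral) yields $\mathcal{E}_t \Delta Y_t \geq 0$, hence $\Delta Y_t \geq 0$ a.s. The strict inequality statement follows because if any of $\xi^1 > \xi^2$ or $f^1(t,Y^2,Z^2) > f^2(t,Y^2,Z^2)$ on a set of positive measure, then the remainder $R$ or the terminal data is strictly positive there, and strict positivity propagates through the conditional expectation.

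The main technical obstacle is justifying that $\mathcal{D}$ is a true (uniformly integrable) martingale so that the change of measure is legitimate and the local-martingale part can be discarded under conditional expectation. This is where the condition $\sum_i \beta^i_t \Delta H^{(i)}_t > -1$ is essential: together with boundedness of the $\beta^i_s$ inherited from the Lipschitz constant of $f^1$, it lets one apply a Novikov/Kazamaki-type criterion for purely discontinuous exponentials in the Teugels setting (as carried out in \cite{QZ}). Once this is in place, the rest of the argument is a routine linearization and sign analysis.
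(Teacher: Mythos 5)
Your proposal is correct and follows essentially the same route as the paper's proof: linearize the difference equation in $y$ via the Lipschitz/monotonicity bounds and in $z$ via the telescoping through the intermediate vectors $\widetilde Z^{(i)}$, then use the Dol\'eans--Dade exponential of $\int a\,dr+\int b\,dA+\sum_i\int\beta^i\,dH^{(i)}$ (your $\mathcal{E}_t$ is exactly the paper's $\Gamma_{s,t}$, kept positive by $\sum_i\beta^i_t\Delta H^{(i)}_t>-1$) as an integrating factor, and conclude by conditioning on $\widetilde{\mathcal{F}}_t$. The only cosmetic difference is that you frame the exponential as a Girsanov change of measure and discuss Novikov/Kazamaki-type integrability, whereas the paper never changes measure and simply applies It\^o's formula to $\Gamma_{s,r}\bar Y_r$ and takes conditional expectation (relying implicitly on the boundedness of $a$, $b$, $\beta^i$ for the martingale property you rightly flag).
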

\begin{proof}
Let define
$$a_t=
[f^{1}(t,Y_{t^{-}}^{1},Z_{t}^{1})-f^{1}(t,Y_{t^{-}}^{2},Z_{t}^{1})]/
(Y_{t^{-}}^{1}-Y_{t^{-}}^{2})\textbf{1}_{\{Y_{t^{-}}^{1}\neq
Y_{t^{-}}^{2}\}}$$
$$b_t=
[\phi(t,Y_{t^{-}}^{1})-\phi(t,Y_{t^{-}}^{2})]/(Y_{t^{-}}^{1}-Y_{t^{-}}^{2})
\textbf{1}_{\{Y_{t^{-}}^{1}\neq Y_{t^{-}}^{2}\}};\hspace{1.55cm}$$ We note that $(a_t)_{t\in[0,T]}$ and
$(b_t)_{t\in[0,T]}$ are bounded measurable processes.

For $0\leq s \leq t \leq T$, let
$\Gamma_{s,t}=1+\displaystyle\int_s^t\Gamma_{s,r^{-}}dX_r,$ where
$$X_t=\int_{0}^{t}a_r dr+\int_{0}^{t}b_{r}dA_{r}+\sum_{i=1}^{m}\int_{0}^{t}\beta_{r}^{i}dH_{r}^{(i)}.$$
Then, we have (cf. Doléans-Dade exponential formula)
\begin{eqnarray}
 \Gamma_{s,t}=\exp\Big(\int_{s}^{t}dX_{r}-\frac{1}{2}\int_{s}^{t}\|
\beta_r\|^2dr\Big)\prod_{s<r\leq t}(1+\Delta X_r)\exp(-\Delta X_r)
\end{eqnarray}
with $\displaystyle\Delta X_t=\sum_{i=1}^{m}\beta_{t}^{i}\Delta H_{t}^{(i)}>-1$. Thus, for all
$0\leq s\leq t\leq T$, \ $\Gamma_{s,t}>0$.

Let denote $$\bar{\xi}=\xi^{1}-\xi^{2}, \ \ \ \ \bar{Y}_t
=Y_{t}^{1}-Y_{t}^{2}, \ \ \ \ \bar{Z}_t=Z_{t}^{1}-Z_{t}^{2}\hspace{1.6cm}$$
$$\hspace{0.6cm}\bar{f}_{t}=f^{1}(t,Y_{t^{-}}^{2},Z_{t}^{2})-f^{2}(t,Y_{t^{-}}^{2},Z_{t}^{2})
$$
and
$$\hspace{0.6cm}\bar{\phi}_{t}=\phi^{1}(t,Y_{t^{-}}^{2})-\phi^{2}(t,Y_{t^{-}}^{2}).
$$
Then
\begin{eqnarray*}\label{eq3l}
\bar{Y}_{t}&=&\bar{\xi}+\int_{t}^{T}[a_s \bar{Y}_{s^{-}}+ \sum_{i=1}^{m}\beta_s^i
\bar{Z}_{s}^{(i)}+\bar{f}_{s}]ds +\int_{t}^{T}[b_s\bar{Y}_{s^{-}}+\bar{\phi}_{s}]dA_{s}-\sum_{i=1}^{m}\int_{t}^{T}\bar{Z}_{s}^{(i)}dH_{s}^{(i)},\ \ \ \ \ t\in[0,T]
\end{eqnarray*}
Applying Itô's formula to $\Gamma_{s,r}Y_r$ from $r = t$ to $r = T$, it follows that
\begin{eqnarray*}
\Gamma_{s,t}\bar{Y}_t&=&\Gamma_{s,T}\bar{\xi}-\int_{t}^{T}\Gamma_{s,r^{-}}d\bar{Y}_r
-\int_{t}^{T}\bar{Y}_{r^{-}}d\Gamma_{s,r^{}}-\int_{t}^{T}d[\Gamma_{s,.},\bar{Y}]_{r}\\
&=&\Gamma_{s,T}\bar{\xi}+\int_{t}^{T}\Gamma_{s,r^{-}}
[\sum_{i=1}^{m}\beta_r^i
\bar{Z}_{r}^{(i)}+\bar{f}_{r}]dr+\int_{t}^{T}\Gamma_{s,r^{-}}\bar{\phi}_{r}dA_r-
\sum_{i=1}^{m}\int_{t}^{T}\Gamma_{s,r^{-}}\bar{Z}_{s}^{(i)}dH_{s}^{(i)}
\notag\\&& +
\sum_{i=1}^{m}\int_{t}^{T}\bar{Y}_{r^{-}}\Gamma_{s,r^{-}}\beta_{r}^{i}dH_{r}^{(i)}
-\sum_{i,j=1}^{m}\int_{t}^{T}\Gamma_{s,r^{-}}\beta_r^i\bar{Z}_{r}^{(j)}d[H^{i},H^{j}]_r\\
&=&\Gamma_{s,T}\bar{\xi}+\int_{t}^{T}\Gamma_{s,r^{-}}\bar{f}_{r}dr+\int_{t}^{T}\Gamma_{s,r^{-}}\bar{\phi}_{r}dA_r-
\sum_{i=1}^{m}\int_{t}^{T}\Gamma_{s,r^{-}}\bar{Z}_{s}^{(i)}dH_{s}^{(i)}
+\sum_{i=1}^{m}\int_{t}^{T}\bar{Y}_{r^{-}}\Gamma_{s,r^{-}}\beta_{r}^{i}dH_{r}^{(i)}.
\end{eqnarray*}
Taking conditional expectation w.r.t. $\widetilde{\mathcal{F}}_s$, is not hard to see that for $s=t$
\begin{eqnarray*}
\bar{Y}_{t}&=&\E\left(\Gamma_{t,T}\bar{\xi}+
\int_t^T\Gamma_{t,r^{-}}\bar{f}_{r}dr+\int_{t}^{T}\Gamma_{s,r^{-}}\bar{\phi}_{r}dA_r\ | \ \widetilde{\mathcal{F}}_{t}\right).
\end{eqnarray*}
Therefore $\bar{Y}_{t}\geq0$, i.e. $Y_t^1\geq Y_t^2$, a.s. Moreover if $\bar{\xi}> 0$ or $\bar{f}_t> 0$, a.s., then $\bar{Y}_{t}>0$, i.e. $Y_t^1> Y_t^2$, a.s.
\end{proof}

Now we state existence and uniqueness result.\newline
Firstly, we suppose $g$ independent from $(Y,Z)$ and consider RGBDSDEL:
\begin{eqnarray}
\left\{\begin{array}{ll}
Y_{t}=\xi+\int_{t}^{T}f(s,Y_{s^-},Z_{s})ds+\int_{t}^{T}\phi(s,Y_{s^-})dA_s+\int_{t}^{T}g(s)\,dB_{s}
-\sum_{i=1}^{m}\int_{t}^{T}Z^{(i)}_{s}dH^{(i)}_{s}+K_{T}-K_t,\\\\
Y_{t}\geq S_{t},\,\,\,\ 0\leq t\leq T,\\\\
(K_s)_{0\leq s\leq T}\; \mbox{is increasing, continuous and satisfies}\; \int_{0}^{ T}\left( Y_{s^-}-S_{s}\right)dK_s=0.
\end{array}
\right.
\label{a11}
\end{eqnarray}
\begin{proposition}
Under assumptions $({\bf H1})$,\ $({\bf H2})$ and
$({\bf H4})$, the basic RGBDSDEL \eqref{a11} has a unique solution.
\end{proposition}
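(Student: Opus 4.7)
The strategy, sketched at the end of the introduction, is to reduce the problem to the case $g\equiv 0$, establish existence and uniqueness there via penalization, and then transfer back. First, since $g$ is independent of $(Y,Z)$, I would eliminate the $dB$ integral by the standard change of variable $\tilde Y_t = Y_t + \int_0^t g(s)\,dB_s$, $\tilde\xi = \xi + \int_0^T g(s)\,dB_s$, $\tilde S_t = S_t + \int_0^t g(s)\,dB_s$, which transforms \eqref{a11} into a reflected generalized BSDE driven solely by the Teugels martingales, with shifted coefficients $\tilde f,\tilde\phi$ that still satisfy the analogues of (H1)--(H2)--(H4) and the same reflecting process $K$. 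Hence it suffices to treat \eqref{a11} with $g\equiv 0$.

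Second, I would introduce the penalization
\begin{equation*}
Y^{n}_{t} = \xi + \int_t^T f(s,Y^n_{s^-},Z^n_s)\,ds + \int_t^T \phi(s,Y^n_{s^-})\,dA_s + n\int_t^T (Y^n_s - S_s)^-\,ds - \sum_{i=1}^{m} \int_t^T Z^{n(i)}_s\,dH^{(i)}_s,
\end{equation*}
and set $K^n_t := n\int_0^t (Y^n_s-S_s)^-\,ds$. For each fixed $n$ the drift is globally Lipschitz in $(y,z)$, so existence and uniqueness of $(Y^n,Z^n)\in\mathcal{S}^2(\R)\times\mathcal{M}^2(\R^m)$ follow from the standard theory of generalized BSDEs driven by Teugels martingales, and the comparison result (Lemma~3.1) implies that $(Y^n)$ is non-decreasing in $n$. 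I would then derive uniform-in-$n$ a priori bounds for $\mathbb{E}\sup_{t\leq T}|Y^n_t|^2$, $\mathbb{E}\int_0^T\|Z^n_s\|^2 ds$ and $\mathbb{E}|K^n_T|^2$ by applying It\^o's formula to $e^{\mu A_t}|Y^n_t|^2$ with $\mu$ chosen large enough for the monotonicity condition (H2)(b)(ii) to dominate the Lipschitz contributions of $f$, making the drift coercive. Similar It\^o computations on $e^{\mu A_t}|Y^n-Y^p|^2$, combined with the monotonicity of $(Y^n)$, give Cauchy estimates, yielding $Y^n\uparrow Y$ in $\mathcal{S}^2$, $Z^n\to Z$ in $\mathcal{M}^2$ and $K^n\to K$ with $\mathbb{E}|K_T|^2<\infty$, the limit triple satisfying the integral equation.

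The delicate step is the pathwise reflection $Y_t\geq S_t$ for every $t$ a.s. The penalty estimate gives $\mathbb{E}\int_0^T((Y^n_s - S_s)^-)^2 ds \to 0$, hence $Y\geq S$ only $ds\otimes d\mathbb{P}$-a.e.; because $\mathbf{F}$ is not a filtration one cannot immediately upgrade this to a pathwise inequality. Here the reduction to $g\equiv 0$ pays off: in the reduced problem one can condition on the backward Brownian $\sigma$-field, work within the forward filtration $\widetilde{\mathbf{F}}$, and apply the classical section theorem to conclude $Y_t\geq S_t$ for every $t$. The Skorokhod condition $\int_0^T (Y_{s^-}-S_s)\,dK_s = 0$ then follows by passing to the limit in the identity $n\int_0^T(Y^n_s - S_s)(Y^n_s - S_s)^-\,ds = -n\int_0^T((Y^n_s - S_s)^-)^2 ds \to 0$.

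Uniqueness is obtained by applying It\^o's formula to $e^{\mu A_t}|Y_t - Y'_t|^2$ for two candidate solutions, using the Lipschitz property of $f$, the monotonicity of $\phi$, and the inequality $\int_0^T (Y_t - Y'_t)\,d(K_t - K'_t)\leq 0$ implied by the Skorokhod conditions together with $Y,Y'\geq S$. The principal obstacle of the whole argument is the reflection verification in Step~3; the other steps are adaptations of classical penalization machinery for reflected BSDEs, while the $g\equiv 0$ reduction is precisely what allows the section theorem to be invoked in a setting where $\mathbf{F}$ itself is not a filtration.
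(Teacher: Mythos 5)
Your overall architecture coincides with the paper's: penalize with $f_n(s,y,z)=f(s,y,z)+n(y-S_s)^-$, get uniform a priori estimates and Cauchy estimates, eliminate the $dB$-integral by the shift $\bar Y^n_t=Y^n_t+\int_0^t g(s)\,dB_s$, $\bar S_t=S_t+\int_0^t g(s)\,dB_s$ so that the section theorem becomes available in the filtration $\widetilde{\mathbf F}$, obtain the Skorokhod condition by passing to the limit in $\int_0^T(Y^n_{s^-}-S_s)\,dK^n_s\le 0$, and prove uniqueness by It\^o's formula. (The paper performs the $g$-elimination only inside the reflection step rather than globally at the outset, but that is cosmetic.)

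There is, however, a genuine gap in your treatment of the delicate step. You correctly observe that the penalty estimate only yields $Y\ge S$ in the $ds\otimes d\P$-a.e.\ sense, but then you assert that, after reducing to $g\equiv 0$, one can ``apply the classical section theorem to conclude $Y_t\ge S_t$ for every $t$.'' The section theorem does not upgrade a $ds\otimes d\P$-a.e.\ inequality to an indistinguishability statement; it requires as input that $Y_\nu\ge S_\nu$ a.s.\ for \emph{every} stopping time $\nu$, and the $L^2(ds\otimes d\P)$ convergence of $n\,(Y^n-S)^-$ gives no control at a fixed stopping time (the exceptional set could have null time-sections). The paper supplies the missing device: it compares $\bar Y^n$ with the solution $\widetilde Y^n$ of the auxiliary penalized equation carrying the drift $n\int_t^T(\bar S_s-\widetilde Y^n_{s^-})\,ds$ in place of $n\int_t^T(\bar Y^n_{s^-}-\bar S_s)^-\,ds$; the comparison theorem (Lemma 3.1) gives $\bar Y^n_\nu\ge\widetilde Y^n_\nu$, and $\widetilde Y^n_\nu$ admits an explicit conditional-expectation representation with exponential kernel $e^{-n(\cdot)}$ which converges to $\bar S_\nu$ as $n\to\infty$, whence $Y_\nu\ge S_\nu$ at every stopping time and only then does the section theorem apply. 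Without this (or an equivalent) intermediate comparison, your plan does not close at the reflection step.
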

\begin{proof}
In the sequel, $C$ denotes a strictly positive and finite constant which may take different values
from line to line.\newline
{\bf Existence.}\;
For each $n\in\N^{*}$, we set
\begin{eqnarray}\label{a0}
f_{n}(s,y,z)=f(s,y,z)+n(y-S_{s})^{-}.
\end{eqnarray}
By $\cite{HY1}$, let $(Y^{n},Z^{n})$ be a unique pair of process with values in $\R\times \R^{m}$ satisfying: $(Y^{n},Z^{n})\in S^2\times \mathcal{M}^{2}$
and
\begin{eqnarray}
Y_{t}^{n}&=&\xi+\int_{t}^{T}f_n(s,Y_{s^-}^{n},Z_{s}^{n})ds+\int_{t}^{T}\phi(s,Y_{s^-}^{n})dA_s\nonumber\\
&&+\int_{t}^{T}g(s)\,dB_{s}-\sum_{i=1}^{m}\int_{t}^{T}(Z_{s}^{n})^{(i)}dH^{(i)}_{s}. \label{h2}
\end{eqnarray}
Let \begin{eqnarray}
K^{n}_t=n\int_{0}^{t}(Y^n_{s^-}-S_s)^{-}ds\label{K}
\end{eqnarray}

{\it Step 1: A priori estimate}\\
We have
\begin{eqnarray*}
\sup_{n\in\N^{*}}\E\left( \sup_{0\leq t\leq T}\left|
Y_{t}^{n}\right| ^{2}+\int_{t}^{T}\left\| Z_{s}^{n}\right\| ^{2}
ds+|K^{n}_{T}|^{2}\right)<C.
\end{eqnarray*}
Indeed, by It\^{o}'s formula,  we have
\begin{align*}
&\E\left| Y_{t}^{n}\right|^{2}+\int_{t}^{T}\|Z_{s}^{n}\|^{2}ds\\
&\leq\left| \xi\right|
^{2}+2\E\int_{t}^{T}Y_{s^-}^{n}f(s,Y_{s^-}^{n},Z_{s}^{n}) ds
+2\E\int_{t}^{T}Y_{s^-}^{n}\phi(s,Y_{s^-}^{n})dA_s\nonumber\\
&+\E\int_{t}^{T}|g(s)|^{2}ds +2\E\int_{t}^{T} S_{s}dK_{s}^{n}.
\end{align*}
Using $(\textbf{H2})$ and the elementary inequality $2ab\leq \gamma
a^{2}+\frac{1}{\gamma} b^2,\ \forall\gamma>0$,
\begin{eqnarray*}
2Y_{s}^{n}f(s,Y_{s}^{n},Z_{s}^{n})&\leq&(c\gamma_1+\frac{1}{\gamma_1})|Y^{n}_{s}|^{2}
+2c\gamma_1\|Z^{n}_{s}\|^{2}+2\gamma_1 f_s^{2},\\
2Y_{s}^{n}\phi(s,Y_{s}^{n})&\leq& (\gamma_2-2|\beta|)|Y^{n}_{s}|^{2}+\frac{1}{\gamma_2}\phi_s^{2}.
\end{eqnarray*}
We choose $\displaystyle\gamma_1=\frac{1}{4c}$,
$\displaystyle\gamma_2=2|\beta|$ in the previous to obtain for all
$\varepsilon >0$
\begin{align}\label{a2}
&
\E\left|Y_{t}^{n}\right|^{2}+\frac{1}{2}\E\int_{t}^{T}\left\|Z_{s}^{n}\right\|^{2}ds \nonumber\\
&\leq
C\E\left\{|\xi|^{2}+\int_{t}^{T}|Y^{n}_{s}|^{2}ds+\int_{t}^{T}f_s^{2}ds+
\int_{t}^{T}\phi_s^{2}dA_s+\int_{t}^{T}|g(s)|^{2}ds\right\}\nonumber\\
& +\frac{1}{\varepsilon}\E\left(\sup_{0\leq s\leq
t}(S_{s}^+)^2\right)+\varepsilon\E \left(K_{T}^{n}-K_t^n\right)^2.
\end{align}
In virtue of (\ref{h2}) and $\eqref{K}$ we have
\begin{align}\label{kn1}
\E(K^n_T-K^{n}_t)^2\leq C\E\left\{|\xi|^{2}+\int_{t}^{T}f_s^{2}ds+
\int_{t}^{T}\phi_s^{2}dA_s+\int_{t}^{T}|g(s)|^{2}ds+\int_{t}^{T}
\left|Y_s^n\right|^2ds\right.\nonumber\\
\left.+\E\left(\sup_{0\leq s\leq
t}(S_{s}^+)^2\right)+\int_0^t
\left|Y_s^n\right|^2dA_s+\int_{t}^{T}\|Z_s^n\|^2ds\right\}
\end{align}
which, put in $\eqref{a2}$ together with Burkhölder-Davis-Gundy inequality provides
\begin{eqnarray*}
\E\left\{\sup_{0\leq t\leq T}
|Y_{t}^{n}|^{2}+\int_{t}^{T}\|Z_{s}^{n}\|^{2}ds+|K_{T}^{n}|^{2}\right\}
&\leq & C\E\left\{|\xi|^{2}+\int^{T}_{0}f_s^{2}ds+\int^{T}_{0}\phi_s^{2}dA_s\right.\nonumber\\
&&+\left.\int^{T}_{0}|g(s)|^{2}ds+\sup_{0\leq t\leq
T}(S_{t}^{+})^{2}\right\},
\end{eqnarray*}
provided that $\varepsilon $ is small enough.\\
{\it Step 2:} $Y_t\geq S_t$, a.s. $\forall\ t\in[0,T]$ where for all $0\leq t\leq T,\ Y_t=\sup_{n}Y^n_t$

Let us define
$$
\left\{
\begin{array}{ll}
&\displaystyle \overline{\xi}:=\xi+\int_{0}^{T}g\left(
s\right)  dB_{s}\\
& \displaystyle\overline{S}_{t}:=S_{t}+\int_{0}^{t}g\left(
s\right)  dB_{s}\\
& \displaystyle\overline{Y}_{t}^{n}:=Y_{t}^{n}+\int_{0} ^{t}g\left(
s\right)  dB_{s}.
\end{array}
\right.
$$
Hence,  according $\eqref{h2}$ we have
\begin{equation}\label{Ynbar}
\overline{Y}_{t}^{n}=\overline{\xi}+\int_{t}^{T}f\left(
s,Y_{s^-}^n,Z_s^n\right) ds+n\int_{t}^{T}\left(\overline{Y}_{s^-}^{n}
-\overline{S_{s} }\right) ^{-}ds+\int_{t}^{T}\phi\left(s,
Y_{s^-}^n\right) dA_s-\sum_{i=1}^{m}\int_{t}^{T}(Z_{s}^{n})^{(i)}dH^{(i)}_{s}.
\end{equation}
Let $(\widetilde{Y}^{n},\widetilde{Z}^{n})$ be a unique solution of the  GBDSDEL
\begin{eqnarray*}
\widetilde{Y}_{t}^{n} &=&\overline{S}_{T}+\int_{t}^{T}f\left(
s,Y_{s^-}^n,Z_s^n\right)
ds+n\int_{t}^{T}(\overline{S}_{s}-\widetilde{Y}_{s^-}^{n})ds
+\int_{t}^{T}\phi\left(s, Y_{s^-}^n\right)
dA_s-\sum_{i=1}^{m}\int_{t}^{T}(\widetilde{Z} _{s}^{n})^{(i)}dH^{(i)}_{s}.
\end{eqnarray*}
Since $\overline{S}_T\leq \overline{\xi}$, the previous comparison theorem shows that for every $n\geq 1$, $\overline{Y}^{n}_t\geq\widetilde{Y}_{t}^{n}$ a.s., for all $0\leq t\leq T$.\newline
Next, let $\sigma$ be a $\widetilde{\mathcal{F}}_t$-stopping time, and $\nu=\sigma\wedge T$. The sequence of processes $(\widetilde{Y}_{\nu }^{n})$ satisfies the equality
\begin{eqnarray*}
\widetilde{Y}_{\nu }^{n} &=&\E^{\widetilde{\mathcal{F}}_\nu}\left\{ e^{-n(T-\nu)}\overline{S}_{T}
+\int^{ T}_{\nu }e^{-n(\nu-s)}f(s,Y_{s^-}^{n},Z_{s}^{n})ds+n\int^{ T}_{\nu }e^{-n(\nu-s)}\overline{S}_{s}ds \right.\nonumber \\
&&\left.+\int^{ T}_{\nu }e^{-n(\nu-s)}\phi(s,Y_{s^-}^{n})dA_s\right\} \label{c'2}
\end{eqnarray*}
and therefore converges to $\overline{S}_\nu$ a.s. This implies that $S_\nu\leq Y_\nu$ a.s. It
follows from the section theorem (\cite{DM},\, p. 220) that for every $t\in[0,T],\; Y_{t}\geq S_{t}$ a.s.

{\it Step 3: Convergence of $(Y^n,Z^n)$}\\
\noindent Since $Y_{t}^{n}\nearrow Y_{t}$ a.s. for all $0\leq t\leq T$, using Fatou's lemma and step 1, we have
\begin{eqnarray*}
\E\left( \sup_{0\leq t\leq T }\left| Y_{t}\right| ^{2}\right)
<+\infty.
\end{eqnarray*}
Moreover, Lebesgue's dominated
convergence theorem  provide
\begin{eqnarray*}
\E\left( \int_{0}^{T }\left| Y_{s}^{n}-Y_{s}\right| ^{2} ds
\right)\longrightarrow 0,\,\,\ \mbox{as}\,\  n\rightarrow
\infty.\label{b9}
\end{eqnarray*}
Next, in virtue of step 2 we get, for\ $n \geq p$,
\begin{eqnarray*}
\E\left( \sup_{0\leq s\leq T}\left| Y_{s}^{n}-Y_{s}^{p}\right|
^{2}+\int_{0}^{T}\left\| Z_{s}^{n}-Z_{s}^{p}\right\| ^{2}ds+\sup_{0\leq s\leq T}\left| K_{s}^{n}-K_{s}^{p}\right|
^{2}\right)
\longrightarrow 0,\,\, \mbox{ as }\,  n,p\longrightarrow
\infty,\label{b12}
\end{eqnarray*}
which provides that the sequence of processes $(Y^{n},Z^{n},K^{n})$
is Cauchy in the Banach space $\mathcal{E}^{2,m}$. Consequently, there exists
a triplet $(Y,Z,K)\in\mathcal{E}^{2,m}$ such that
\begin{eqnarray*}
\E\left\{\sup_{0\leq s\leq T}\left| Y_{s}^{n}-Y_{s}^{{}}\right|
^{2}+\int_{0}^{T}\left\|Z^{n}_{s}-Z_{s}\right\|^{2}ds+\sup_{0\leq
s\leq T}\left| K_{s}^{n}-K_{s}^{{}}\right| ^{2}\right) \rightarrow
0,\mbox{ as }n\rightarrow \infty .
\end{eqnarray*}
{\it Step 4: The limit $(Y,Z,K)$ solve RGBDSDEL $\eqref{a11}$}\\
Since $(Y^{n},K^{n})$ converge to $(Y,K)$ in probability, the measure $dK^n$ converges to $dK$ weakly in probability, so that $\int_{0}^{T}(Y_{s^-}^{n}-S_{s})dK_{s}^{n}\rightarrow \int_{0}^{T}(Y_{s^-}-S_{s})dK_{s}$ in probability as $n\rightarrow \infty$. Obviously, $\int_{0}^{T}(Y_{s^-}-S_{s})dK_{s}\geq 0$,
while, on the other hand, for all $n\geq 0$, $\int_{0}^{T}(Y_{s^-}^{n}-S_{s})dK_{s}^{n}\leq 0$.
Hence $\int_{0}^{T}(Y_{s^-}-S_{s})dK_{s}= 0$, a.s. Finally, passing to the limit in $(\ref{h2}),\, (Y,Z,K)$ verifies (\ref{a1}).

{\bf Uniqueness.}\;\; Let $(\Delta Y,\Delta Z, \Delta K)$ be the difference between two arbitrary solutions. Since $\int_{t}^{T}(\Delta Y-\Delta S_s)d(\Delta K_s)=0$, the uniqueness follows using the same computation as above.
\end{proof}
\begin{theorem}\label{Th:exis-uniq}
Assume that $({\bf H1})$,\ $({\bf H2})$,\ $({\bf H3})$ and
$({\bf H4})$ hold. Then, RGBDSDEL \eqref{a1} has a unique solution.
\end{theorem}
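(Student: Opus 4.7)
The plan is to deploy Banach's fixed point theorem, in the spirit sketched in the introduction, reducing the theorem to the basic RGBDSDEL already solved in the preceding Proposition. Fix $\gamma>0$ (to be chosen) and consider the Banach space
\begin{equation*}
\mathcal{H}_\gamma:=\mathcal{M}^2(\mathbb{R})\times\mathcal{M}^2(\mathbb{R}^m),\qquad
\|(U,V)\|_\gamma^2=\E\int_0^T e^{\gamma s}\bigl(|U_s|^2+\|V_s\|^2\bigr)\,ds .
\end{equation*}
For $(U,V)\in\mathcal{H}_\gamma$, set $\bar g(s):=g(s,U_s,V_s)$; assumption $(\mathbf{H3})(a)$ gives $\E\int_0^T|\bar g(s)|^2ds<\infty$, so the preceding Proposition produces a unique $(Y,Z,K)\in\mathcal{E}^{2,m}$ solving the basic RGBDSDEL \eqref{a11} with data $(\xi,f,\phi,\bar g,S)$. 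Define $\Psi:\mathcal{H}_\gamma\to\mathcal{H}_\gamma$ by $\Psi(U,V)=(Y,Z)$; any fixed point of $\Psi$ yields, together with the associated $K$, a solution of \eqref{a1}, and conversely.

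Given two inputs $(U^i,V^i)$, $i=1,2$, let $(Y^i,Z^i,K^i)=\Psi(U^i,V^i)$ (with its $K^i$) and write $\Delta U,\Delta V,\Delta Y,\Delta Z,\Delta K$ for the differences. Apply It\^o's formula to $e^{\gamma s}|\Delta Y_s|^2$ on $[0,T]$ and take expectations. Three structural features control the resulting identity. First, the Skorokhod minimality condition $(iii)$ of Definition 2.1 forces $\int_0^T e^{\gamma s}\Delta Y_{s^-}\,d(\Delta K)_s\le 0$ (indeed, on $\{dK^1_s>0\}$ one has $Y^1_{s^-}=S_s\le Y^2_{s^-}$, and symmetrically on $\{dK^2_s>0\}$), so the reflection terms disappear favourably. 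Second, the $\phi$-contribution is handled by $(\mathbf{H2})(b)(ii)$ which, thanks to $\beta<0$, delivers $2\Delta Y_{s^-}\bigl(\phi(s,Y^1_{s^-})-\phi(s,Y^2_{s^-})\bigr)\le 2\beta|\Delta Y|^2\le 0$. Third, because $\int g\,dB$ is a backward It\^o integral, its quadratic variation produces a term $\E\int_0^T e^{\gamma s}|g(s,U^1,V^1)-g(s,U^2,V^2)|^2 ds$ on the right-hand side, which by $(\mathbf{H3})(b)$ is dominated by $\E\int_0^T e^{\gamma s}\bigl(c|\Delta U|^2+\alpha\|\Delta V\|^2\bigr)ds$.

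Combining these ingredients with $(\mathbf{H2})(b)(i)$ and Young's inequality $2ab\le\eta a^2+\eta^{-1}b^2$ applied to the $f$-term (choosing $\eta$ small enough to absorb a fraction of $\|\Delta Z\|^2$ on the left), and finally taking $\gamma$ large enough to dominate the resulting coefficient of $|\Delta Y|^2$, the estimate collapses to
\begin{equation*}
\E\int_0^T e^{\gamma s}\bigl(|\Delta Y|^2+\|\Delta Z\|^2\bigr)ds
\;\le\; \kappa\,\E\int_0^T e^{\gamma s}\bigl(|\Delta U|^2+\|\Delta V\|^2\bigr)ds
\end{equation*}
for some $\kappa<1$. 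The main obstacle is precisely the balance leading to $\kappa<1$: the coefficient of $\|\Delta V\|^2$ on the right-hand side is essentially $\alpha$, inherited from $(\mathbf{H3})(b)$, and no amount of tuning in $\gamma$ or $\eta$ can improve it, so the hypothesis $\alpha<1$ is indispensable. Once contraction is established, the Banach fixed point theorem furnishes a unique $(Y,Z)\in\mathcal{H}_\gamma$, and the accompanying $K\in\mathcal{A}^2(\mathbb{R})$ inherited from the Proposition completes the unique solution in $\mathcal{E}^{2,m}$, with uniqueness following from the same contraction inequality applied to any two solutions.
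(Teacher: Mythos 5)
Your proposal is correct and follows essentially the same route as the paper: freeze $(U,V)$ in $g$, invoke the preceding Proposition for the basic RGBDSDEL, and show the induced map is a contraction for an exponentially weighted $\mathcal{M}^2$-norm, using the Skorokhod condition to discard the $\Delta K$ term, the monotonicity $\beta<0$ for $\phi$, and $\alpha<1$ from $(\mathbf{H3})(b)$ to get the contraction constant below one. The only cosmetic differences are the sign of the exponential weight and that you explicitly justify $\int e^{\gamma s}\Delta Y_{s^-}\,d(\Delta K)_s\le 0$, which the paper merely asserts.
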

\begin{proof} {\bf Existence.} In light of Proposition 3.2 and for $(\bar{Y},\bar{Z})\in\mathcal{S}^{2}(\R)\times \mathcal{M}^{2}(\R^m)$, let $(Y,Z,K)$ be a unique solution of RGBDSDEL:\\\\
$\left\{
\begin{array}{ll}
Y_{t}=\xi+\int^{T}_t f(s,{Y}_s,{Z}_s)ds
+\int_{t}^{T}\phi(s,Y_s)dA_s+\int_{t}^{T}g(s,\bar{Y}_s,\bar{Z}_s)\,dB_{s}
-\sum^{m}_{i=1}\int_{t}^{T}Z_{s}^{(i)}dH^{(i)}_{s}+K_T-K_t,\\
Y_t\geq S_t,\;\; \mbox{a.s.},\\
\int_0^T(Y_s-S_s)^{-}dK_s=0.
\end{array}\right.
$
We consider the mapping
$$
\begin{array}{lrlll}
\Psi:&\mathcal{S}^{2}(\R)\times \mathcal{M}^{2}(\R^m)&\longrightarrow&\mathcal{S}^{2}(\R)\times \mathcal{M}^{2}(\R^m)\\
&(\bar{Y},\bar{Z})&\longmapsto&(Y,Z)=\Psi(\bar{Y},\bar{Z}).
\end{array}
$$
Let $(Y,Z),\ (Y',Z'),\ (\bar{Y},\bar{Z})$ and $(\bar{Y'},\bar{Z'})$ in $\mathcal{S}^{2}(\R)\times \mathcal{M}^{2}(\R^m)$  such that
$(Y,Z)=\Psi(\bar{Y},\bar{Z})$ and $(Y',Z')=\Psi(\bar{Y'},\bar{Z'})$.
Putting $\Delta \eta=\eta-\eta'$ for any process $\eta$, we have
\begin{eqnarray*}
&&\E e^{-\mu t}|\Delta Y_t|^2+\E\int_{t}^T e^{-\mu s}\|\Delta Z_s\|^2ds\\
&&=2\E\int_{t}^T e^{-\mu s}\Delta
Y_s\left\{f(s,{Y}_{s^-},{Z}_{s})-f(s,{Y'}_{s^-},{Z'}_s)\right\}ds
+2\E\int_{t}^T e^{-\mu s} \Delta Y_{s}\left\{\phi(s,{Y}_{s^-})-\phi(s,{Y'}_{s^-})\right\}dA_s\\
 &&+2\E\int_{t}^T e^{-\mu s} \Delta Y_s d(\Delta K_s)+\int_{t}^Te^{-\mu
s}\left|g(s,\bar{Y}_{s^-},\bar{Z}_{s})-g(s,\bar{Y'}_{s^-},\bar{Z'}_{s})\right|^2ds-\mu\E\int_{t}^T
e^{-\mu s} \left|\Delta Y_s\right|^2 ds.
\end{eqnarray*}
Since $\displaystyle\E\int_{t}^T e^{-\mu s} \Delta Y_s d(\Delta K_s)\leq 0$ and using $({\bf H2})$-$({\bf H3})$, there exists constant $\alpha<\alpha'<1$ such that
\begin{eqnarray*}
&&(\mu-\gamma)\E\int_{t}^T
e^{-\mu s} \left|\Delta Y_s\right|^2 ds+\alpha'\E\int_{t}^T e^{-\mu s}\|\Delta Z_s\|^2ds\\
&&\leq c\E\int_{t}^T e^{-\mu s}\left|\Delta \bar {Y}_s\right|^2ds+\alpha\E\int_{t}^T e^{-\mu s}\left|\Delta \bar {Z}_s\right|^2ds,
\end{eqnarray*}
with $\gamma=\frac{c}{1-\alpha'}-1+\alpha$.

Choosing $\mu=\gamma+\alpha'c/\alpha$ and set $\bar{c}=\alpha' c/\alpha$, it follows from above that
\begin{eqnarray*}
&&\bar{c}\E\int_0^T
e^{-\mu s} \left|\Delta Y_s\right|^2 ds+\alpha'\E\int_0^T e^{-\mu s}\|\Delta Z_s\|^2ds\\
&&\leq \frac{\alpha}{\alpha'}\left(\bar{c}\E\int_0^T e^{-\mu
s}\left|\Delta \bar {Y}_s\right|^2ds+\alpha'\E\int_0^T e^{-\mu
s}\left|\Delta \bar {Z}_s\right|^2ds\right).
\end{eqnarray*}
Therefore $\Psi$ is a strict contraction on $\mathcal{S}^{2}(\R)\times \mathcal{M}^{2}(\R^m)$ equipped with the norm
\begin{eqnarray*}
\|Y,Z)\|^{2}=\bar{c}\E\int_0^T
e^{-\mu s} \left|Y_s\right|^2 ds+\alpha'\E\int_0^T e^{-\mu s}\|Z_s\|^2ds
\end{eqnarray*}
such that its unique fixed point is the solution of RGBDSDEL \eqref{a1}.

{\bf Uniqueness.}\; Assume $\left( Y_{t},Z_{t},K_{t}\right)_{0\leq t\leq T}$ and  $(Y_{t}^{\prime },Z_{t}^{\prime },K_{t}^{\prime })_{0\leq t\leq T}$ are two solutions of the RGBDSDEL $(\xi,f,g,\phi,S)$. We set $\Delta Y_{t}=Y_{t}-Y_{t}^{\prime},\, \Delta Z_{t}=Z_{t}-Z_{t}^{\prime }$ and
$\Delta K_{t}=K_{t}-K_{t}^{\prime }$.

Applying Itô's formula to $|\Delta Y|^2$ on the interval $[t,T]$ and taking
expectation on both sides, we have
\begin{eqnarray*}
&&\E\left| \Delta Y_{t}\right| ^{2}+\E\int_{t}^{T}\|\Delta Z_{s}\| ^{2}ds \\
&\leq&(4c^{2}+c+\frac{1}{2})\E\int_{t}^{T}|\Delta Y_{s}|^{2}ds+\frac{1}{2}\E\int_{t}^{T}\|\Delta Z_{s}\|^2ds.
\end{eqnarray*}
Hence by Gronwall's inequality, we derive $\E|\Delta Y_{t}|^{2}=0$ i.e $Y_{t}=Y^{\prime}_t$ a.s, for all $0\leq t\leq T$.
Therefore $Z_t=Z^{\prime}_t$ and $K_t=K^{\prime}_t$.
\end{proof}

\section {Connection to reflected stochastic PDIEs with nonlinear
Neumann boundary condition} \setcounter{theorem}{0}\setcounter{equation}{0}

In this section, we aim to show that the adapted solution of RGBDSDEL  is the solution of an obstacle problem for SPDIEs with a nonlinear Neumann boundary condition in the Markovian case under a regular assumptions on the coefficients.

We consider the L\'{e}vy process $L$ with no Brownian part and bounded
jump i.e $L_t=at+\int_{|z|\leq 1}z(N_t(.,dz)-t\nu(dz))$ where $N_t(\omega, dz)$ denotes the random measure such that $\int_{\Lambda}N_t(.,dz)$ is a Poisson process with parameter $\nu(\Lambda)$ for all set $\Lambda\;  (0\notin\Lambda)$. Without lost of generality, we suppose that
$\sup_t |\Delta L_t| \leq 1$. Then, for all $p\geq 1,\, \E|L_t|^p < \infty$. For more detail see \cite{Pr}, Theorem 34, page 25.

\subsection{A class of reflected diffusion process}
We now introduce a class of reflected diffusion process. For $\theta>0$, let $\Theta = (-\theta,\theta)$ and $e : [-\theta, \theta] \rightarrow \R$ such that $e(-\theta) = 1$ and $e(\theta) = -1$.\newline
Let $\sigma:\R\rightarrow\R$ be a uniformly bounded function which saisfies:
\begin{description}
\item $(i)\; |\sigma(x)-\sigma(x)|\leq K|x-x'|$ for every $x,x'\in\overline{\Theta}$,
\item $(ii)\; x+y\sigma(x){\bf 1}_{\{|y|\leq 1\}}\in\overline{\Theta}$ for every $x\in\overline{\Theta}$ and $y\in\R$,
\item $(iii)\; c(x)=c(pr(x))$ for all $x\in\R$, where $pr(.)$ denotes the orthogonal projection on $\overline{\Theta}$
\end{description}
As it shown in \cite{MR}, for every $(t,x)\in\overline{\Theta}$,  the process $(X^{t,x},\eta^{t,x})$ is a unique  solution of reflected SDE:
\begin{eqnarray}
\left\{
\begin{array}{l}
\P(X_s^{t,x}\in \overline{\Theta},\, s\geq t)=1\\\\
X_s^{t,x}=x+\int_{t}^s\sigma(X_{r^-}^{t,x})dL_r+\eta_s^{t,x},\; s\geq t,
\end{array}\right.
\label{RSDEJ1}
\end{eqnarray}
with $\eta^{t,x}_s=\int^{s}_t e(X_{r}^{t,x})d|\eta|_r$ with $|\eta^{t,x}|_s=\int^{s}_t{\bf 1}_{\{X_r^{t,x}\in\partial\Theta\}}d|\eta|_r$.

For our next purpose, let recall this needed Lemma. We refer the reader to \cite{NS2} for more detail.
\begin{lemma}
let $c:\Omega\times[0,T]\times\R\rightarrow\R$ be a measurable function such that
\begin{eqnarray*}
    |c(s,y)|\leq b_s(y^2\wedge|y|)\;\; a.s.,
\end{eqnarray*}
where $\{b_s, s\in [0, T]\}$ is a non-negative process such that $\E\int^T_0 b^2_sds <\infty$. Then, for each $0\leq t\leq T$, we have
\begin{eqnarray*}
\sum_{t\leq s\leq T}c(s,\Delta L_s)=\sum^{m}_{i=1}\int^{T}_{t}\langle c(s,.),p_i\rangle_{L^2(\nu)}dH^{(i)}_s+\int^{T}_{t}\int_{\R}c(s,y)d\nu(y)ds.
\end{eqnarray*}
\end{lemma}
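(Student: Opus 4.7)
The plan is to pass from the pathwise sum over jumps to an integral against the jump measure of $L$, split off the drift part by compensation, and then expand the remaining compensated integral in the $L^2(\nu)$–orthonormal system $\{p_i\}_{i=1}^m$, which is precisely what produces the Teugels martingales.

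I would start by rewriting
\begin{eqnarray*}
\sum_{t\leq s\leq T}c(s,\Delta L_s)=\int_t^T\!\!\int_\R c(s,y)\,N(ds,dy),
\end{eqnarray*}
where $N$ denotes the Poisson jump measure of $L$ with intensity $\nu(dy)\,ds$. The growth bound $|c(s,y)|\leq b_s(y^2\wedge|y|)$ together with $\E\int_0^T b_s^2\,ds<\infty$ and the standing assumption $\int(1\wedge y)\,\nu(dy)<\infty$ on the L\'evy measure ensures that both $\int_t^T\!\int_\R c(s,y)\,\nu(dy)\,ds$ is well defined and $\int_t^T\!\int_\R|c(s,y)|^2\nu(dy)\,ds<\infty$, so that compensation is legitimate:
\begin{eqnarray*}
\int_t^T\!\!\int_\R c(s,y)\,N(ds,dy)=\int_t^T\!\!\int_\R c(s,y)\,\widetilde N(ds,dy)+\int_t^T\!\!\int_\R c(s,y)\,\nu(dy)\,ds,
\end{eqnarray*}
with $\widetilde N=N-\nu\otimes ds$. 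The second term on the right is precisely the drift piece appearing in the claim.

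For the compensated integral I would invoke the representation of the Teugels martingales in the form $H^{(i)}_t=\int_0^t\!\int_\R p_i(y)\,\widetilde N(ds,dy)$, which follows from $H^{(i)}=\sum_{k=1}^{i}c_{i,k}Y^{(k)}$ together with $Y^{(k)}_t=\int_0^t\!\int_\R y^k\,\widetilde N(ds,dy)$. The polynomials $\{q_{i-1}\}_{i=1}^{m}$ are orthonormal in $L^2(\mu)$ with $\mu(dx)=x^2\,d\nu(x)$, hence the $\{p_i\}_{i=1}^{m}$ are orthonormal in $L^2(\nu)$, and because $L$ carries only $m$ jump sizes they span the closed subspace of $L^2(\nu)$ containing every function vanishing at $0$ that satisfies the growth bound in the hypothesis. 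I would expand
\begin{eqnarray*}
c(s,y)=\sum_{i=1}^{m}\langle c(s,\cdot),p_i\rangle_{L^2(\nu)}\,p_i(y)
\end{eqnarray*}
and, by an elementary approximation argument (first for finite linear combinations of $p_i$, then by $L^2$–isometry of the compensated Poisson integral), conclude
\begin{eqnarray*}
\int_t^T\!\!\int_\R c(s,y)\,\widetilde N(ds,dy)=\sum_{i=1}^{m}\int_t^T\langle c(s,\cdot),p_i\rangle_{L^2(\nu)}\,dH^{(i)}_s,
\end{eqnarray*}
which combined with the previous display yields the claim.

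The main obstacle is verifying the completeness of $\{p_i\}_{i=1}^{m}$ in the relevant subspace of $L^2(\nu)$ and justifying the interchange of summation and stochastic integration. The first point relies on the structural assumption that $L$ has exactly $m$ jump sizes with non-Brownian part, so that $L^2(\nu)$ restricted to functions vanishing at $0$ is $m$–dimensional; the second is handled by the It\^o isometry for compensated Poisson integrals applied to the truncated expansions, using the square integrability of $b_s$ to pass to the limit.
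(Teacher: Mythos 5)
The paper offers no proof of this lemma at all: it simply refers the reader to Nualart and Schoutens \cite{NS2}, and your argument is essentially the one found there (and in Lemma~2.1 of \cite{Bal}-type arguments): rewrite the jump sum as an integral against the jump measure $N$ of $L$, compensate to split off the $d\nu\,ds$ term, and expand the integrand in the orthonormal system $\{p_i\}$ to produce the Teugels integrals. Two points in your sketch deserve to be made explicit. First, the identity $H^{(i)}_t=\int_0^t\int_\R p_i(y)\,\widetilde{N}(ds,dy)$ and the orthonormality of the $p_i$ in $L^2(\nu)$ both use that $L$ has no Brownian part, so that $\mu(dx)=x^2\,\nu(dx)$ with no $\sigma^2\delta_0$ term; this is indeed the standing assumption of Section~4, but it is the reason the lemma is stated only in that setting. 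Second, since $\nu$ charges only $m$ points, $L^2(\nu)$ is exactly $m$-dimensional, so your expansion of $c(s,\cdot)$ is a finite, exact identity and the ``approximation argument'' you invoke for interchanging the sum with the stochastic integral reduces to plain linearity --- no isometry or limiting step is actually required, though your fallback via the It\^o isometry would also work.
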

\subsection{Feynman-Kac Formula}
Fix $T > 0$ and for all $(t,x)\in[0,T]\times\overline{\Theta}$, let $(X_s^{t,x},\eta_s^{t,x})_{s\geq t}$ denote the solution of the reflected SDE \eqref{RSDEJ1}. And we suppose now that the data $(\xi, f,\phi, g, S)$ of the RGBDSDEL take the form
\begin{eqnarray*}
\xi=l(X_T^{t,x}),\\
f(s, y, z)= f(s, X_s^{t,x}, y, z),\\
\phi(s,y)= \phi(s, X_s^{t,x}, y),\\
g(s, y, z)= g(t, X_s^{t,x}, y, z),\\
S_t = h(t, X_s^{t,x}).
\end{eqnarray*}
And we give the following assumptions:

Firstly, we assume that $h\in C^3([0,T]\times\overline{\Theta};\R)$ and $h(T,x)=l(x),\, \forall\,x\in\overline{\Theta},\; l\in C^3(\overline{\Theta};\R)$

Secondly, let $f\in C^3([0,T]\times\overline{\Theta}\times\R\times\R^m;\R),\; \phi\in C^3([0,T]\times\overline{\Theta}\times\R;\R)$ and $g\in C^3([0,T]\times\overline{\Theta}\times\R\times\R^m;\R)$ satisfy $({\bf H2})$ and $({\bf H3})$ respectively uniformly in $x\in\overline{\Theta}$.

It follows from the results of the Section 3 that, for all $(t,x)\in [0,T]\times\overline{\Theta}$ there exists a unique triple $(Y^{t,x}_s,Z^{t,x}_s,K^{t,x}_s)_{t\leq s\leq T}$ for the solution of the following RGBDSDEL:
\begin{eqnarray}
\left\{
\begin{array}{l}
(i)\;\E\left[\sup_{t\leq s\leq T}|Y^{t,x}_s|^2+\int_{t}^{T}\|Z^{t,x}_s\|^2ds\right];\\\\
(ii)\;
Y_{s}^{t,x}=l(X_T^{t,x})+\int_{s}^{T}f(r,X_{r}^{t,x},Y_{r}^{t,x},Z_{r}^{t,x})dr+\int_{s}^{T}
\phi(r,X_{r}^{t,x},Y_{r}^{t,x})d|\eta^{t,x}|_r+\int_{s}^{T}g(r,X_{r}^{t,x},Y_{r}^{t,x},Z_{r}^{t,x})\,dB_{r}\\\\
-\sum_{i=1}^{m}\int_{s}^{T}(Z^{t,x})^{(i)}_{r}dH^{(i)}_{r}+K^{t,x}_{T}-K^{t,x}_s,\,\ t\leq s\leq T;\\\\
(iii)\; Y^{t,x}_{s}\geq h(s,X^{t,x}_{s}),\,\,\,\ t\leq s\leq T;\\\\
(iv)\;(K^{t,x}_s)_{t\leq s\leq T}\; \mbox{is increasing, continuous and satisfies}\; \int_{t}^{ T}\left( Y^{t,x}_{s}-h(s,X_{s}^{t,x})\right)dK_s^{t,x}=0.
\end{array}\right.
\label{GBDSDEmarkovian}
\end{eqnarray}
We now consider the related obstacle problem for SPDIEs with a nonlinear Neumann boundary condition. Roughly speaking,
a classic solution of the obstacle problem is a random field such that $u(t,x)$ is
$\mathcal{F}_{t,T}$-measurable for each $(t,x)$ and $u\in C^{1,2}([0,T]\times\overline{\Theta};\R)$ which satisfies:
\begin{eqnarray}
\left\{
\begin{array}{l}
\displaystyle \min\left\{u(t,x)-h(t,x),\; \frac{\partial u}{\partial t}(t,x)+a'\sigma(x)
\frac{\partial u}{\partial x}(t,x)+f(t,x,u(t,x),(u^{(i)}(t,x))_{i=1}^{m})\right.\\\\
\left.\,\,\,\,\,\,\,\,\,\,\,\,\,\,\,\,\,\ +\int_{\R}u^1(t,x,y)d\nu(y)+
g(t,x,u(t,x),(u^{(i)}(t,x))_{i=1}^{m})\dot{B}_{t}\right\}=0,\,\,\
(t,x)\in[0,T]\times\Theta\\\\
\displaystyle e(x)\frac{\partial u}{\partial
x}(t,x)+\phi(t,x,u(t,x))=0,\,\,\ (t,x)\in[0,T]\times\{-\theta,\theta\},
\\\\ u(T,x)=l(x),\,\,\,\,\,\,\ x\in\Theta,
\end{array}\right.
\label{RSPDIE}
\end{eqnarray}
where
\begin{description}
\item $(i)\; a'=a+\int_{\{|y|\geq 1\}}y\nu(dy)$,
\item $(ii)\;  dB_t=\dot{B}_t$ denotes a white noise
\item $(iii)\; u^1(t,x,y)=u(t,x+y)-u(t,x)-\frac{\partial u}{\partial_x}(t,x)y$,
\item $(iv)\; u^{(1)}(t,x)=\int_{\R}u^1(t, x, y)p_1(y)\nu(dy) +\sigma(x)\frac{\partial u}{\partial x}(t, x)(\int_{\R} y^2\nu(dy))^{1/2}$,
\item $(v)\; u^{(i)}(t,x)=\int_{\R}u^1(t, x, y)p_i(y)\nu(dy),\; 2\leq i\leq m$.
\end{description}

We have
\begin{theorem}
Let $u$ the classic solution of SPDIE \eqref{RSPDIE}. Then the unique adapted solution of $(\ref{GBDSDEmarkovian})$ is given by
\begin{eqnarray*}
u(s,X^{t,x}_s)&=&Y^{t,x}_s\\
(Z^{t,x})^{(1)}_s &=&\int_{\R}u^1(t,X^{t,x}_{s^-}, y)p_1(y)\nu(dy) + \frac{\partial u}{\partial x} \sigma(X^{t,x}_{s^-})\left(\int_{\R}y^2\nu(dy)\right)^{1/2}\\
(Z^{t,x})^{(i)}_s &=&\int_{\R} u^{1}(t,X^{t,x}_{s^-},y)p_i(y)\nu(dy), \; 2\leq i\leq m.
\end{eqnarray*}
In particular $u(t,x)=Y^{t,x}_t$
\end{theorem}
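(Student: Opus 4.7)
The plan is to apply a generalized It\^o formula to the random field $u(s,X^{t,x}_s)$ on the interval $[s,T]$ and match the resulting decomposition term-by-term with the RGBDSDEL \eqref{GBDSDEmarkovian}; uniqueness will then come for free from Theorem \ref{Th:exis-uniq}. Because $u(r,\cdot)$ is $\mathcal{F}_{r,T}^{B}$-measurable while $X^{t,x}$ is adapted to the L\'evy filtration, the It\^o formula one needs is the hybrid forward/backward one (forward It\^o in $L$ and in $|\eta^{t,x}|$, backward It\^o in $B$), which is precisely what was used in the existence argument of Theorem \ref{Th:exis-uniq}.

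First I would expand $u(s,X^{t,x}_s)-u(T,X^{t,x}_T)$ using this formula. This produces (i) a time-derivative term $-\int_s^T \partial_t u(r,X^{t,x}_r)\,dr$, (ii) a drift term $-\int_s^T a'\sigma(X^{t,x}_r)\partial_x u(r,X^{t,x}_r)\,dr$ coming from the finite-variation part of $L$, (iii) a compensated jump part $-\sum_{s<r\le T}u^{1}(r,X^{t,x}_{r^-},\Delta L_r)$, (iv) a reflection contribution $-\int_s^T e(X^{t,x}_r)\partial_x u(r,X^{t,x}_r)\,d|\eta^{t,x}|_r$ collected on $\{X^{t,x}_r\in\partial\Theta\}$, and (v) a backward stochastic integral with respect to $B$. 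The crucial step is to rewrite the jump sum (iii) by means of Lemma 4.1 applied to $c(r,y)=u^{1}(r,X^{t,x}_{r^-},y)$: this yields
\[
\sum_{s<r\le T}u^{1}(r,X^{t,x}_{r^-},\Delta L_r)
=\sum_{i=1}^{m}\int_s^{T}\langle u^{1}(r,X^{t,x}_{r^-},\cdot),p_i\rangle_{L^{2}(\nu)}\,dH^{(i)}_r
+\int_s^T\!\!\int_{\R}u^{1}(r,X^{t,x}_{r^-},y)\,d\nu(y)\,dr,
\]
and after absorbing the Brownian-quadratic-variation contribution $\sigma(X^{t,x}_{r^-})\partial_x u\,(\int y^2\nu(dy))^{1/2}$ into the $i=1$ term of the Teugels sum I recover exactly the definitions $(Z^{t,x})^{(i)}_r$ proposed in the statement.

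Once those substitutions are made, the SPDIE \eqref{RSPDIE} is inserted in two places. On the boundary $\{-\theta,\theta\}$, the Neumann condition $e(x)\partial_x u+\phi(t,x,u)=0$ converts term (iv) into $+\int_s^T \phi(r,X^{t,x}_r,u(r,X^{t,x}_r))\,d|\eta^{t,x}|_r$. In the interior, the min-equation gives
\[
\partial_t u+a'\sigma\,\partial_x u+\int_{\R}u^{1}\,d\nu+f(r,X^{t,x}_r,u,u^{(\cdot)})+g(r,X^{t,x}_r,u,u^{(\cdot)})\dot B_r
\;=\;-dK^{t,x}_r/dr\;\le\;0,
\]
with equality on $\{u>h\}$. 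I therefore \emph{define} $K^{t,x}_s$ to be the non-negative process that accounts for this slack, i.e.\ $K^{t,x}_s=\int_t^s\mathbf{1}_{\{u(r,X^{t,x}_r)=h(r,X^{t,x}_r)\}}\,d\kappa_r$ where $\kappa$ is the slack measure in the min condition. By the min-equation this $K^{t,x}$ is continuous and non-decreasing, and by construction it is supported on the contact set, so the Skorokhod condition $\int_t^T(Y^{t,x}_r-h(r,X^{t,x}_r))\,dK^{t,x}_r=0$ holds. Inserting everything into the expansion of $u(s,X^{t,x}_s)$ shows that the triple $(u(s,X^{t,x}_s),(Z^{t,x})_s,K^{t,x}_s)$ satisfies equation \eqref{GBDSDEmarkovian}(ii)--(iv); the integrability in (i) follows from the $C^{1,2}$-regularity of $u$ together with the boundedness of $\overline\Theta$ and standard moment bounds for $X^{t,x}$ and $\eta^{t,x}$.

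The main obstacle I expect is the careful bookkeeping in the jump/reflection It\^o expansion: the forward L\'evy part, the reflection local time $|\eta^{t,x}|$, and the backward Brownian integral must all be handled in a single formula, and one needs Lemma 4.1 to translate between the Poisson-jump picture and the Teugels-martingale picture used in the RGBDSDEL. Once these are reconciled the identification is mechanical, and uniqueness of $(Y^{t,x},Z^{t,x},K^{t,x})$ follows directly from Theorem \ref{Th:exis-uniq}; in particular, taking $s=t$ yields $u(t,x)=Y^{t,x}_t$.
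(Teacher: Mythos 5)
Your verification argument is essentially sound, but it takes a genuinely different route from the paper's. The paper does not apply It\^o's formula to $u$ itself: it returns to the penalization scheme of Section 3, writes the penalized (non-reflected) GBDSDEL with coefficient $f_n(t,x,y,z)=f(t,x,y,z)+n(y-h(t,x))^-$, invokes the non-reflected Feynman--Kac representation of \cite{HY1} to identify ${}^nY^{t,x}_s=u_n(s,X^{t,x}_s)$ where $u_n$ solves the corresponding SPDIE without obstacle, performs the It\^o/Lemma 4.1 computation at the level of $u_n$, and only then passes to the limit $n\to\infty$ using the convergence of $({}^nY,{}^nZ,K^n)$ established in Proposition 3.2. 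What that buys is that the reflecting process $K$ is obtained as the limit of $K^n_s=n\int({}^nY_{r^-}-h(r,X_r))^-\,dr$, so its continuity, monotonicity and the Skorokhod condition come for free from Section 3; one never has to read $dK$ off the PDE. Your route instead extracts $K$ directly from the slack in the min-equation, and that is the one step you should not wave at: because of the white-noise term $g(\cdot)\dot B_t$, the expression $\partial_t u+a'\sigma\,\partial_x u+\int_{\R}u^1\,d\nu+f+g\dot B$ is only formal, and turning its defect into a bona fide continuous increasing process supported on $\{u=h\}$ requires either an integrated (mild) reformulation of \eqref{RSPDIE} or, in effect, the penalization approximation the paper uses. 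Apart from that, your It\^o/Lemma 4.1 bookkeeping, the identification of $(Z^{t,x})^{(i)}$ (including folding the $\sigma\partial_x u(\int y^2\nu(dy))^{1/2}$ contribution into the $i=1$ Teugels component), and the use of the Neumann condition to convert the reflection term into the $\phi\,d|\eta^{t,x}|$ integral all match the computation the paper carries out for $u_n$, and the appeal to the uniqueness of Theorem \ref{Th:exis-uniq} to conclude is the same.
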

\begin{proof}
For each $n\geq 1$, let
$\{^{n}Y_{s}^{t,x},{}^{n}Z_{s}^{t,x},\,\ t\leq s\leq T\}$ denote the
solution of the GBDSDEL
\begin{eqnarray*}
^{n}Y_{s}^{t,x}&=&l(X^{t,x}_{T})+\int^{T}_{s}f(r,X^{t,x}_{r^-},{}^{n}Y^{t,x}_{r^-},
{}^{n}Z^{t,x}_{r})dr+n\int^{T}_{s}({}^{n}Y^{t,x}_{r^-}-h(r,X^{t,x}_{r}))^{-}dr\nonumber\\
&&+\int^{T}_{s}\phi(r,X^{t,x}_{r^-},^{n}Y^{t,x}_{r^-})d|\eta^{t,x}|_r
\int^{T}_{s}g(r,X^{t,x}_{r^-},^{n}Y^{t,x}_{r^-},
{}^{n}Z^{t,x}_{r})dB_{r}-\sum^{m}_{i=1}\int^{T}_{s}
{}^{n}(Z^{t,x})^{(i)}_{r}dH^{(i)}_{r}. \label{c4}
\end{eqnarray*}
As it shown in \cite{HY1} we have
\begin{eqnarray*}
{}^{n}Y_s^{t,x} &=& u_n(s,X^{t,x}_s),\\
({}^{n}Z^{t,x})^{(1)}_t &=&\int_{\R}u_n^1(t,X_{t^-}, y)p_1(y)\nu(dy) + \frac{\partial u_n}{\partial x} \sigma(X^{t,x}_{s^-})\left(\int_{\R}y^2\nu(dy)\right)^{1/2}\\
({}^{n}Z^{t,x})^{(i)}_t &=&\int_{\R} u_n^{1}(t,X^{t,x}_{s^-},y)p_i(y)\nu(dy), \; 2\leq i\leq m,
\end{eqnarray*}
where $u_n$ is the classical solution of stochastic PDIE:
\begin{eqnarray*}
\left\{
\begin{array}{l}
\displaystyle \frac{\partial u_n}{\partial t}(t,x)+a'\sigma(x)
\frac{\partial u_n}{\partial x}(t,x)+f_n(t,x,u_n(t,x),(u_n^{(i)}(t,x))_{i=1}^{m})\\\\
\,\,\,\,\,\,\,\,\,\,\,\,\,\,\,\,\,\ +\int_{\R}u_n^1(t,x,y)d\nu(y)+
g(t,x,u_n(t,x),(u_n^{(i)}(t,x))_{i=1}^{m})\dot{B}_{t}=0,\,\,\
(t,x)\in[0,T]\times\Theta\\\\
\displaystyle e(x)\frac{\partial u_n}{\partial
x}(t,x)+\phi(t,x,u_n(t,x))=0,\,\,\ (t,x)\in[0,T]\times\{-\theta,\theta\},
\\\\ u_n(T,x)=l(x),\,\,\,\,\,\,\ x\in\Theta,
\end{array}\right.
\label{SPDIE}
\end{eqnarray*}
where $f_{n}(t,x,y,z)=f(t,x,y,z)+n(y-h(t,x))^{-}$.

Applying It\^{o}'s formula to $u_n(s,X_s)$, we obtain
\begin{eqnarray}
u_n(T,X^{t,x}_T)-u_n(s,X_s^{t,x})&=&\int_{s}^{T}\frac{\partial u_n}{\partial r}(r,X^{t,x}_{r^-})dr+\int_{s}^{T}e(X_r^{t,x})\frac{\partial u_n}{\partial x}(r,X^{t,x}_{r})d|\eta|_r\nonumber\\
&&+\int_{s}^{T}\sigma(X_{r^-})\frac{\partial u_n}{\partial x}(r,X^{t,x}_{r^-})dL_r\nonumber\\
&&+\sum_{s\leq r\leq T}[u_n(r,X^{t,x}_r)-u_n(r,X^{t,x}_{r^-})
-\frac{\partial u_n}{\partial x}(r,X^{t,x}_{r^-})\Delta X^{t,x}_r].\label{Ito}
\end{eqnarray}
Since $\Delta X^{t,x}_r=\sigma(X^{t,x}_{r^-})\Delta L_r$, applying Lemma 4.1 with $$c(r,y)=u_n(r,X^{t,x}_{r^-})+\sigma(X^{t,x}_{r^-})y)-u_n(r,X^{t,x}_{r^-})-\frac{\partial u_n}{\partial x}(r,X^{t,x}_{r^-})\sigma(X^{t,x}_{r^-})y,$$ we get
\begin{eqnarray}
\sum_{s\leq r\leq T}[u_n(r,X_r^{t,x})-u_n(r,X^{t,x}_{r^-})-\frac{\partial u_n}{\partial x}(r,X^{t,x}_{r^-})\Delta X^{t,x}_r]
&=&\sum_{i=1}^{m}\int_{s}^{T}\left(\int_{\R}u_n^{1}(r,X^{t,x}_{r^-},y)p_{i}(y)\nu(dy)\right)dH^{(i)}_r\nonumber\\
&&+\int_{s}^{T}\left(\int_{\R}u_n^{1}(r,X^{t,x}_{r^-},y)\nu(dy)\right)dr.\label{Applema}
\end{eqnarray}
Let us recall that
\begin{eqnarray}
L_t=Y^{(1)}_t+t\E L_1=\left(\int_{\R}y^2\nu(dy)\right)^{1/2}H^{(1)}+t\E L_1, \label{Levyproperty}
\end{eqnarray}
where $\E L_1=a+\int_{\{|y|\geq 1\}}y\nu(dy)$.
Hence, substituting $(\ref{RSDEJ1})$, $(\ref{Applema})$ and $(\ref{Levyproperty})$ into $(\ref{Ito})$ together with $(\ref{SPDIE})$ yields
\begin{eqnarray*}
&&l(X_T^{t,x})-u_n(t,X^{t,x}_s)\nonumber\\
&=&\int_{s}^{T}\left[\frac{\partial u_n}{\partial s}(r,X^{t,x}_{r^-})+(a+\int_{|y|\geq 1}y\nu(dy))\sigma(X_{r^-})\frac{\partial u_n}{\partial x}(r,X^{t,x}_{r^-})+\int_{\R}u_n^{1}(r,X^{t,x}_{r^-},y)\nu(dy)\right]dr\nonumber\\
&&+\int_{s}^{T}e(X_r)\frac{\partial u_n}{\partial x}(r,X^{t,x}_{r}){\bf 1}_{\{X^{t,x}_r\in\partial\Theta\}}d|\eta^{t,x}|_r\nonumber\\
&&+\int_{s}^{T}\left[\int_{\R}u_n^{1}(r,X^{t,x}_{r^-},y)p_{1}(y)\nu(dy)+\sigma(X^{t,x}_{r^-})\frac{\partial u_n}{\partial x}(r,X^{t,x}_{r^-})\left(\int_{\R}y^2\nu(dy)\right)^{1/2}\right]dH^{(1)}_r\nonumber\\
&&+\sum_{i=2}^{m}\int_{s}^{T}\left(\int_{\R}u_n^{1}(r,X^{t,x}_{r^-},y)p_{i}(y)\nu(dy)\right)dH^{(i)}_r.\nonumber\\
&=&-\int_{s}^{T}f(r,X^{t,x}_{r^-},u_n(r,X^{t,x}_r),(u_n(r,X^{t,x}_r))_{i=1}^{m})dr-n\int_{s}^{T}
(u_n(r,X^{t,x}_r)-h(r,X^{t,x}_r))^{-}dr\\
&&-\int_{s}^{T}g(r,X^{t,x}_{r^-},u_n(r,X^{t,x}_r),(u_n^{(i)}(t,x))_{i=1}^{m})dB_r-\int_{s}^{T}\phi(r,X^{t,x}_{r^-},u_n(r,X^{t,x}_r))d|\eta|_s\\
&&+\int_{s}^{T}\left[\int_{\R}u_n^{1}(r,X^{t,x}_{r^-},y)p_{1}(y)\nu(dy)+\sigma(X^{t,x}_{r^-})\frac{\partial u_n}{\partial x}(r,X^{t,x}_{r^-})\left(\int_{\R}y^2\nu(dy)\right)^{1/2}\right]dH^{(1)}_r\nonumber\\
&&+\sum_{i=2}^{m}\int_{t}^{T}\left(\int_{\R}u_n^{1}(r,X^{t,x}_{r^-},y)p_{i}(y)\nu(dy)\right)dH^{(i)}_s.
\end{eqnarray*}
Passing to the limit and using the previous section we get
\begin{eqnarray*}
&&u(t,X^{t,x}_s)-l(X_T^{t,x})\nonumber\\
&=&\int_{s}^{T}f(r,X^{t,x}_{r^-},u(r,X^{t,x}_r),(u(r,X^{t,x}_r))_{i=1}^{m})dr\\
&&+\int_{s}^{T}g(r,X^{t,x}_{r^-},u(r,X^{t,x}_r),(u(r,X^{t,x}_r))_{i=1}^{m})dB_r+K_T-K_t+\int_{s}^{T}\phi(r,X^{t,x}_{r^-},u(r,X^{t,x}_r))d|\eta|_s\\
&&-\int_{s}^{T}\left[\int_{\R}u^{1}(r,X^{t,x}_{r^-},y)p_{1}(y)\nu(dy)+\sigma(X^{t,x}_{r^-})\frac{\partial u}{\partial x}(r,X^{t,x}_{r^-})\left(\int_{\R}y^2\nu(dy)\right)^{1/2}\right]dH^{(1)}_r\nonumber\\
&&-\sum_{i=2}^{m}\int_{t}^{T}\left(\int_{\R}u^{1}(r,X^{t,x}_{r^-},y)p_{i}(y)\nu(dy)\right)dH^{(i)}_s,
\end{eqnarray*}
which get the desired result of the Theorem.
\end{proof}

In this follows, we give a example of reflected SPDIEs with a nonlinear Neumann
boundary condition.
\begin{example}
We consider the very special case where Lévy process $L$ is defined by $L_t = at+N_t-\alpha t$, where $N$ is Poisson processes with parameters $\alpha>0$. Then we have $H^{(1)}_t=\frac{\beta}{\sqrt{\alpha}}(N_t-\alpha t)$ and $H^{(i)}_t=0,\; i\geq 2$ (see \cite{NS2}). Moreover the reflected SPDIE \eqref{SPDIE} reduces to
\begin{eqnarray*}
\left\{
\begin{array}{l}
\displaystyle \min\left\{u(t,x)-h(t,x),\; \frac{\partial u}{\partial t}(t,x)+a'\sigma(x)
\frac{\partial u}{\partial x}(t,x)+f(t,x,u(t,x),\frac{\partial u}{\partial x}(t,x))\right.\\\\
\left.\,\,\,\,\,\,\,\,\,\,\,\,\,\,\,\,\,\ +\alpha u^1(t,x,\beta)+
g(t,x,u(t,x),\frac{\partial u}{\partial x}(t,x))\dot{B}_{t}\right\}=0,\,\,\
(t,x)\in[0,T]\times\Theta\\\\
\displaystyle e(x)\frac{\partial u}{\partial
x}(t,x)+\phi(t,x,u(t,x))=0,\,\,\ (t,x)\in[0,T]\times\{-\theta,\theta\},
\\\\ u(T,x)=l(x),\,\,\,\,\,\,\ x\in\Theta.
\end{array}\right.
\end{eqnarray*}
Then,
\begin{eqnarray*}
Y_s^{t,x} &=& u(s,X_s^{t,x}),\\
Z^{t,x}_s &=&\alpha u^1(t,X^{t,x}_{s^-},\beta)p_1(\beta) + \sqrt{\alpha}|\beta|\sigma(X^{t,x}_{s^-})\frac{\partial u}{\partial x}(s,X^{t,x}_{s^-}),
\end{eqnarray*}
where for each $(t,x)\in [0,T]\times\overline{\Theta},\; (Y^{t,x},Z^{t,x},K^{t,x})$ is the unique solution of the following
\begin{eqnarray*}
\left\{
\begin{array}{l}
(i)\;Y^{t,x}_{s}=l(X^{t,x}_T)+\int_{s}^{T}f(r,X^{t,x}_{r^-},Y^{t,x}_{r^-},Z^{t,x}_{r})dr+\int_{s
}^{T}\phi(r,X^{t,x}_{r^-},Y^{t,x}_{r^-})d|\eta^{t,x}|_r+\int_{s}^{T}g(r,X^{t,x}_{r^-},Y^{t,x}_{r^-},Z^{t,x}_{r})\,dB_{r}\\\\
-\int_{s}^{T}(Z^{t,x})_{s}d\widetilde{N}_r+K^{t,x}_{T}-K^{t,x}_s,\,\ t\leq s\leq
T\\\\
(ii)\; Y^{t,x}_{s}\geq h(s,X^{t,x}_{s}),\,\,\,\ t\leq s\leq T\\\\
(iii)\; \;(K^{t,x}_s)_{t\leq s\leq T}\; \mbox{is increasing, continuous and satisfies}\; \int_{t}^{ T}\left( Y^{t,x}_{s}-h(s,X_{s}^{t,x})\right)K_s^{t,x}=0.
\end{array}\right.
\label{GBDSDEmarkovian}
\end{eqnarray*}
with $\widetilde{N}_t=\frac{\beta}{\sqrt{\alpha}}(N_t-\alpha t)$.
\end{example}

\label{lastpage-01}
\end{document}